\documentclass[11pt]{article}

\usepackage[utf8]{inputenc}
\usepackage{amsmath,amssymb,amsthm}
\usepackage{upref}
\RequirePackage{doi}
\usepackage{hyperref}
\usepackage[capitalize]{cleveref}
\usepackage{mathtools}
\usepackage{tikz-cd}
\usepackage{graphicx, caption}
\usepackage{comment,enumerate}
\usepackage{thmtools}
\usepackage{authblk}

\usepackage{graphicx} 

\newtheorem{theorem}{Theorem}[section]
\newtheorem{lemma}[theorem]{Lemma}
\newtheorem{corollary}[theorem]{Corollary}
\newtheorem{remark}[theorem]{Remark}
\newtheorem{example}[theorem]{Example}
\newtheorem{problem}[theorem]{Problem}
\newtheorem{definition}[theorem]{Definition}
\newtheorem{proposition}[theorem]{Proposition}

\title{Characterization of Weak EKR Groups and Intersection Densities with Prescribed Point Stabilizers}
\author[1]{Bo\v stjan Frelih\thanks{\texttt{bostjan.frelih@upr.si}}}
\author[1]{Ademir Hujdurovi\'c\thanks{\texttt{ademir.hujdurovic@upr.si}}}
\author[1]{Klavdija Kutnar\thanks{\texttt{klavdija.kutnar@upr.si}}}

\affil[1]{University of Primorska, UP FAMNIT and UP IAM, Koper, Slovenia}

\begin{document}

\maketitle

\begin{abstract}
A finite group has the weak Erd\H{o}s--Ko--Rado property if all of its transitive permutation actions have the EKR property. We characterize
this property in terms of normal subgroups and chief factors. More
precisely, we introduce a local intersection density and establish a
normal-extension criterion which reduces the weak EKR property to
difference-set conditions on the elementary abelian chief factors and
the linear groups induced on them. For chief factors of rank one the condition is automatically satisfied, and for chief factors of rank two, this
condition is equivalent to the induced linear group being intransitive
on the one-dimensional subspaces.

In the second part of the paper, we solve an open problem by determining the possible intersection
densities of transitive permutation groups with a prescribed point
stabilizer. We prove that, for every finite group $H$ of order
$m\geq 4$ and every integer $n\geq m$, there exists a faithful
transitive permutation group with point stabilizer isomorphic to $H$
and intersection density $n/m$.
\end{abstract}

\begin{quotation}
\noindent {\em Keywords: Erd\H{o}s-Ko-Rado property, weak EKR property, intersecting sets,
intersection density}
\end{quotation}
\begin{quotation}
\noindent {\em AMS Subject Classification (2020): 20B05, 05D05, 20B10}
\end{quotation}

\section{Introduction}

The Erd\H{o}s-Ko-Rado theorem \cite{ErdosKoRado1961} is one of the
central results in extremal combinatorics. It gives an upper bound on
the size of an intersecting family of uniform subsets and, in the
non-degenerate range, describes the families attaining this bound.
Besides the many extensions of this theorem to other combinatorial
objects, there is a natural permutation-group version, initiated by
Deza and Frankl \cite{DezaFrankl1977} and developed further in, among
others,
\cite{CameronKu2003,GodsilMeagher2009,GodsilMeagher2016}.

Let $G$ be a finite group acting transitively on a finite set $X$. A
subset $\mathcal{F}\subseteq G$ is called \emph{intersecting} if, for
every $g,h\in\mathcal{F}$, there exists $x\in X$ such that
$g(x)=h(x)$; equivalently, $g^{-1}h$ has a fixed point. The problem of
determining the largest intersecting sets in $G$ can be formulated in
graph-theoretic terminology. The \emph{derangement graph} $\Gamma_G$
is the Cayley graph on $G$ whose connection set consists of all
derangements of the action. Thus, intersecting sets in $G$ are
precisely the independent sets of $\Gamma_G$.

Cosets of point stabilizers are canonical examples of intersecting
sets. The action of $G$ is said to have the
\emph{Erd\H{o}s--Ko--Rado property}, or the \emph{EKR-property}, if
every intersecting set has cardinality at most $|G_x|$, where $G_x$ is
the stabilizer of a point $x\in X$. It has the
\emph{strict-EKR-property} if, in addition, every intersecting set of
maximum size is a coset of a point stabilizer. For a transitive
action, its \emph{intersection density} is
\[
\rho(G)=\frac{\alpha(\Gamma_G)}{|G_x|}.
\]
Consequently, $\rho(G)\geq 1$, with equality if and only if the action
has the EKR-property. If the action under consideration is the action
of $G$ on the right cosets of a subgroup $H\leq G$, we write
$\rho(G,H)$ for its intersection density.

The EKR-property and the structure of maximum intersecting sets have
been studied for a variety of permutation groups. In particular, all
finite $2$-transitive groups have the EKR-property
\cite{MeagherSpigaTiep2016}, while considerably stronger structural
results are known for several projective linear groups
\cite{LongPlazaSinXiang2018,MeagherSpiga2011,MeagherSpiga2014,Spiga2019}.
The intersection density, which measures how far a transitive action
is from having the EKR-property, has also received increasing
attention. Relevant results include work on derangement graphs and
groups of prescribed degree
\cite{HujdurovicKutnarKuzmaMarusicMiklavicOrel2022,HujdurovicKutnarMarusicMiklavic2022CertainDegrees,MeagherRazafimahatratraSpiga2021,Razafimahatratra2021},
as well as more recent investigations of primitive and imprimitive
groups, symmetric groups in non-natural actions, and Kneser graphs
\cite{BehajainaMalekiRazafimahatratra2023,
BehajainaMalekiRazafimahatratra2024,
MeagherRazafimahatratra2024,Razafimahatratra2023}.
Further EKR results involving linear and affine groups and higher
intersection conditions can be found in
\cite{MeagherRazafimahatratra2021,
MeagherRazafimahatratra2023}.

Most results in this area concern a fixed permutation action. However,
the EKR-property depends on the action and not only on the abstract
group. Following Bardestani and Mallahi-Karai
\cite{BardestaniMallahiKarai2015}, we say that a finite group $G$ has
the \emph{weak EKR-property} if every transitive action of $G$ has the
EKR-property. Equivalently,
\[
\rho(G,H)=1
\qquad\text{for every subgroup }H\leq G.
\]
Bardestani and Mallahi-Karai proved, among other results, that every
group with the weak EKR-property is solvable. This makes it natural to
ask whether the weak EKR-property can be described in terms of normal
subgroups and the chief factors of the group.

In the first part of this paper, we introduce a local version of
intersection density for a subgroup $H\leq N\trianglelefteq G$. We
prove that this local density agrees with $\rho(G,H)$ (see Lemma~\ref{lem:local-defect-realization}) and use this
observation to obtain a normal-extension criterion for the weak
EKR-property (see Theorem~\ref{thm:normal-extension}). Iterating this criterion along a chief series gives a
characterization of weak EKR groups in terms of their chief factors
and the actions induced on them (see Theorem~\ref{thm:chief-series-characterization}). For chief factors of rank two, the
resulting condition has a particularly simple form: the induced
linear group must be intransitive on the one-dimensional subspaces of
the chief factor (see Corollary~\ref{cor:chief-rank-two}). 

In the second part of the paper, we consider a complementary problem.
Instead of fixing the group and varying its transitive actions, we
fix a group $H$ and ask which intersection densities can occur among
transitive permutation groups whose point stabilizers are isomorphic
to $H$. This problem was posed in \cite{HujdurovicKovacsKutnarMarusic2025}, where the cases
$H\cong C_2$ and $H\cong C_3$ were settled. We complete the problem
for all remaining finite groups. More precisely, if $H$ has order
$m\geq 4$, then for every integer $n\geq m$ we construct a faithful
transitive permutation group with point stabilizer isomorphic to $H$
and intersection density $n/m$ (see Theorem~\ref{thm:Density over H}). 

The paper is organized as follows. In Section~2, we develop the
local-density approach and obtain the characterization of weak EKR
groups, including the rank-two criterion. In Section~3, we study transitive groups with a
fixed point stabilizer and determine all possible intersection
densities when the stabilizer has order at least four.

\section{Characterization of weak EKR groups}

Throughout this section, all groups are finite. For subgroups and
conjugates we use the convention
\[
H^g=g^{-1}Hg,
\]
and we write
\[
\mathcal{U}_G(H):=\bigcup_{g\in G}H^g
\]
for the union of all conjugates of \(H\) in \(G\).

For \(H\leq G\), let \(\rho(G,H)\) denote the intersection density of
the action of \(G\) on the right cosets of \(H\). Thus
\[
\rho(G,H)
 =
 \frac{1}{|H|}
 \max\left\{
 |\mathcal{F}|:
 \mathcal{F}\subseteq G,\ 
 \mathcal{F}^{-1}\mathcal{F}\subseteq \mathcal{U}_G (H)
 \right\}.
\]
In particular, \(G\) has the weak EKR property if and only if
\[
\rho(G,H)=1
\qquad\text{for every }H\leq G.
\]

\begin{definition}[Local density]
\label{def:local-defect}
Let \(H\leq N\trianglelefteq G\). Define the {local density of \(H\) in \(G\) relative to \(N\)} by
\[
\rho_N(G,H)
 :=\frac{1}{|H|}
 \max\left\{
 |A|:
 A\subseteq N,\ 
 A^{-1}A\subseteq \mathcal{U}_G(H)
 \right\}.
\]

\end{definition}

Observe that $\rho_N(G,H)\geq 1$ since we can always take \(A=H\) in the definition of
\(\rho_N(G,H)\). It is also clear that $\rho_N(G,H)\leq \rho(G,H)$, as in $\rho(G,H)$ the maximum is taken over all subsets of $G$, while in $\rho_N(G,H)$ the maximum is taken only over subsets of $N$. In the next lemma we prove that we actually have the equality.

\begin{lemma}
\label{lem:local-defect-realization}
Let \(N\trianglelefteq G\) and let \(H\leq N\). Then
\(
\rho_N(G,H)=\rho(G,H).
\)
\end{lemma}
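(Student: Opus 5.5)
Since $\rho_N(G,H)\le\rho(G,H)$ was already noted, it suffices to show the reverse inequality: from any intersecting set $\mathcal{F}\subseteq G$, i.e.\ one with $\mathcal{F}^{-1}\mathcal{F}\subseteq\mathcal{U}_G(H)$, we will extract a subset of $N$ of size at least $|\mathcal{F}|$ with the same property. The key observation is that $H\le N$ and $N\trianglelefteq G$ give $H^g\le N^g=N$ for all $g\in G$, so
\[
\mathcal{U}_G(H)\subseteq N .
\]
Hence if $f,f'\in\mathcal{F}$, then $f^{-1}f'\in N$, so $f'\in fN$. In other words, every intersecting set lies inside a single left coset $fN=Nf$ of $N$, with $f\in\mathcal{F}$ fixed.

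Given such an $\mathcal{F}$, fix $f_0\in\mathcal{F}$ and set $A:=f_0^{-1}\mathcal{F}$. By the observation above, $A\subseteq N$, and clearly $|A|=|\mathcal{F}|$. Moreover
\[
A^{-1}A=\mathcal{F}^{-1}f_0f_0^{-1}\mathcal{F}=\mathcal{F}^{-1}\mathcal{F}\subseteq\mathcal{U}_G(H).
\]
So $A$ is admissible in Definition~\ref{def:local-defect}. Taking $\mathcal{F}$ of maximum size then gives $\rho_N(G,H)\ge\rho(G,H)$, and together with the trivial inequality this yields equality.

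There is no real obstacle here. The only point needing care is the inclusion $\mathcal{U}_G(H)\subseteq N$, which uses both $H\le N$ and the normality of $N$. We should also check that left translation preserves the condition $\mathcal{F}^{-1}\mathcal{F}\subseteq\mathcal{U}_G(H)$, and the computation above shows this holds identically, with no conjugation needed.
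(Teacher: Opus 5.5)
Your proof is correct and follows essentially the same route as the paper: both use $\mathcal{U}_G(H)\subseteq N$ to place an intersecting set inside a single coset of $N$ and then move it into $N$. Your explicit left translation $A=f_0^{-1}\mathcal{F}$, with the check $A^{-1}A=\mathcal{F}^{-1}\mathcal{F}$, simply spells out the paper's ``without loss of generality $1\in\mathcal{F}$'' step.
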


\begin{proof}
Let \(\mathcal{F}\subseteq G\) be a maximum intersecting  set in the action of \(G\)
on cosets of \(H\). By the definition of intersecting set it follows that
\(
x^{-1}y\in \mathcal{U}_G(H)\)
for all \(x,y\in\mathcal{F}.
\)
Since \(H\leq N\) and \(N\trianglelefteq G\), we have
\(
\mathcal{U}_G(H)\subseteq N.
\)
It follows that \(x^{-1}y\in N\) for all \(x,y\in\mathcal{F}\), and
hence all elements of \(\mathcal{F}\) lie in the same right coset of
\(N\).

Without loss of generality we may assume that $1\in \mathcal{F}$, therefore we may assume that
\(\mathcal{F}\subseteq N\). By the definition of $\rho_N(G,H)$ it follows that $\rho_N(G,H)\geq \frac{|\mathcal{F}|}{|H|}$. Since $\mathcal{F}$ is a maximum intersecting set, we have 
$\rho(G,H)=\frac{|\mathcal{F}|}{|H|}$. 
This shows that $\rho_N(G,H)\geq \rho(G,H)$ hence we obtain the equality.
\end{proof}

We will now present the characterization of weak EKR groups.

\begin{theorem}
\label{thm:normal-extension}
Let \(N\trianglelefteq G\). Then \(G\) has the weak EKR property if
and only if the following two conditions hold:
\begin{enumerate}
\item \(G/N\) has the weak EKR property;
\item
\(
\rho_N(G,H)=1
\quad\text{for every }H\leq N.
\)
\end{enumerate}
\end{theorem}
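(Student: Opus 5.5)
The forward direction follows from Lemma~\ref{lem:local-defect-realization} together with a lifting argument from $G/N$ to $G$. The reverse direction comes from slicing an intersecting set of $G$ along the cosets of $N$: the number of slices is bounded using $G/N$, and the size of each slice is bounded using condition~(2).

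\emph{Necessity.} Assume $G$ has the weak EKR property. For $H\le N$, Lemma~\ref{lem:local-defect-realization} gives $\rho_N(G,H)=\rho(G,H)=1$, which is condition~(2). For condition~(1), every subgroup of $G/N$ has the form $K/N$ with $N\le K\le G$. Let $\bar{\mathcal F}\subseteq G/N$ be intersecting for the action on cosets of $K/N$, and let $\mathcal F\subseteq G$ be its full preimage, so $|\mathcal F|=|N|\,|\bar{\mathcal F}|$. Take $x,y\in\mathcal F$. Then $\overline{x^{-1}y}$ lies in some $(K/N)^{\bar g}=K^g/N$. Since $N\le K^g$, this gives $x^{-1}y\in K^g$, so $\mathcal F$ is intersecting for the action of $G$ on cosets of $K$. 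Hence
\[
|N|\,|\bar{\mathcal F}|=|\mathcal F|\le |K|,
\]
that is, $|\bar{\mathcal F}|\le|K/N|$.

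\emph{Sufficiency.} Assume conditions (1) and (2), let $H\le G$ be arbitrary, and let $\mathcal F\subseteq G$ be intersecting, so $\mathcal F^{-1}\mathcal F\subseteq\mathcal U_G(H)$. We bound $|\mathcal F|$ in two steps.

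\begin{enumerate}
\item \emph{Number of slices.} Let $\bar{\mathcal F}$ be the image of $\mathcal F$ in $G/N$. For $x,y\in\mathcal F$ we have $x^{-1}y\in H^g$, so $\overline{x^{-1}y}\in (HN/N)^{\bar g}$. Thus $\bar{\mathcal F}$ is intersecting for $G/N$ acting on cosets of $HN/N$. By condition~(1),
\[
|\bar{\mathcal F}|\le |HN/N|=\frac{|H|}{|H\cap N|}.
\]
So $\mathcal F$ meets at most $|H|/|H\cap N|$ cosets of $N$.

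\item \emph{Size of each slice.} Fix a coset $tN$ with $t\in\mathcal F$ and put $A=t^{-1}(\mathcal F\cap tN)\subseteq N$. Then
\[
A^{-1}A\subseteq \mathcal F^{-1}\mathcal F\cap N\subseteq \mathcal U_G(H)\cap N .
\]
The key observation is that $\mathcal U_G(H)\cap N=\mathcal U_G(H\cap N)$. Indeed, since $N$ is normal, $N=N^g$ for every $g\in G$, and therefore
\[
H^g\cap N=H^g\cap N^g=(H\cap N)^g .
\]
Since $H\cap N\le N$, condition~(2) applied to the subgroup $H\cap N$ gives $|A|\le |H\cap N|$.
\end{enumerate}

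Multiplying the two bounds gives
\[
|\mathcal F|\le \frac{|H|}{|H\cap N|}\cdot|H\cap N|=|H|,
\]
so $\rho(G,H)=1$. Since $H\le G$ was arbitrary, $G$ has the weak EKR property.

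The step I expect to be the crux is the identity $\mathcal U_G(H)\cap N=\mathcal U_G(H\cap N)$. This is what lets condition~(2), which is stated only for subgroups of $N$, control the slices of intersecting sets for an arbitrary subgroup $H$ of $G$. Once it is in place, the rest is bookkeeping.
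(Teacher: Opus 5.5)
Your proof is correct and follows essentially the same route as the paper. For sufficiency, you bound the number of $N$-cosets met by $\mathcal F$ through the action on cosets of $HN/N$, and you bound each slice using $\mathcal U_G(H)\cap N=\mathcal U_G(H\cap N)$ together with condition~(2); in the necessity direction you get condition~(2) from Lemma~\ref{lem:local-defect-realization}. Both steps are exactly the paper's argument. The only difference is that you prove directly, via full preimages of intersecting sets, that $G/N$ inherits the weak EKR property, whereas the paper cites \cite[Lemma 4]{BardestaniMallahiKarai2015}; your lifting argument is valid and makes the proof self-contained.
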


\begin{proof}
Suppose first that \(G\) has the weak EKR property. The weak EKR
property is inherited by quotient groups by \cite[Lemma 4]{BardestaniMallahiKarai2015}, and hence \(G/N\) also has the
weak EKR property. Since $G$ has the weak EKR property, then $\rho(G,H)=1$ for every subgroup $H\leq G$, in particular also for every $H\leq N$. By Lemma~\ref{lem:local-defect-realization} it follows that $\rho_N(G,H)=1$, for every $H\leq N$.

Conversely, suppose that \(G/N\) has the weak EKR property and that
\(
\rho_N(G,K)=1\)
for every \(K\leq N.
\)
Let \(H\leq G\), and let \(\mathcal{F}\subseteq G\) be intersecting set in
the action of \(G\) on the cosets of \(H\). Let
\(
K:=H\cap N
\)
and let
\(
\pi:G\rightarrow G/N
\)
be the quotient map.

It is straightforward to check that the set \(\pi(\mathcal{F})\) is intersecting in the action
of \(
G/N \) on  cosets of $HN/N$.
Since by the assumption \(G/N\) has the weak EKR property, it follows that $\rho(G/N,HN/N)=1$, hence
\begin{equation}    
|\pi(\mathcal{F})|
 \leq
 |HN/N|
 =
 \frac{|H|}{|H\cap N|}
 =
 \frac{|H|}{|K|}.
\label{eq:image-bound}
\end{equation}

Let $f\in \mathcal{F}$ and consider the intersection of $\mathcal{F}$ with the coset $fN$.  Let $S\subseteq N$ such that $\mathcal{F} \cap fN=fS$. If \(a,b\in S\), then \(fa,fb\in
\mathcal{F}\), so
\[
a^{-1}b=(fa)^{-1}(fb)\in H^g
\]
for some \(g\in G\). Since \(a^{-1}b\in N\), the normality of \(N\) gives
\[
a^{-1}b
 \in
 N\cap H^g
 =
 (N\cap H)^g
 =
 K^g.
\]
Consequently,
\(
S^{-1}S\subseteq \mathcal{U}_G(K).
\)
Since \(\rho_N(G,K)=1\), it follows that
\(
|S|
 \leq |K|.
\) 
This shows that $|\mathcal{F}\cap fN|\leq |K|$.
Together with \eqref{eq:image-bound}, it follows that
\[
|\mathcal{F}|
 \leq
 |K|\,|\pi(\mathcal{F})|
 \leq
 |K|\frac{|H|}{|K|}
 =
 |H|.
\]
Therefore the action of \(G\) on the cosets of \(H\) has the EKR property. Since
\(H\leq G\) was arbitrary, \(G\) has the weak EKR property.
\end{proof}

Using the above result we obtain the following characterization of weak EKR groups in terms of their chief series.

\begin{theorem} \label{thm:chief-series-characterization}
Let \(G\) be a finite group. 
Then \(G\) has the weak EKR property if and only if there exists a chief series 
\( 1=N_0\trianglelefteq N_1\trianglelefteq\cdots \trianglelefteq N_r=G \) 
such that, for every \(i\in\{1,\ldots,r\}\), every subgroup \( W\leq V_i:=N_i/N_{i-1}, \) and \( G_i:=G/N_{i-1}, \) we have \( \rho_{V_i}(G_i,W)=1. 
\label{eq:chief-local-condition} \) 
\end{theorem}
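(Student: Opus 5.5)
We prove both directions by applying Theorem~\ref{thm:normal-extension} along a chief series, going down through the quotients \(G_i=G/N_{i-1}\). In each quotient \(G_i\), the subgroup \(V_i=N_i/N_{i-1}\) is a minimal normal subgroup. The normal-extension criterion is applied to the pair \(V_i\trianglelefteq G_i\), and then we use \(G_i/V_i\cong G/N_i=G_{i+1}\).

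\emph{Forward direction.} Suppose \(G\) has the weak EKR property, and take any chief series \(1=N_0\trianglelefteq\cdots\trianglelefteq N_r=G\); one exists since \(G\) is finite. By \cite[Lemma 4]{BardestaniMallahiKarai2015} the weak EKR property passes to quotients, so each \(G_i=G/N_{i-1}\) has it. Apply Theorem~\ref{thm:normal-extension} to \(G_i\) with the normal subgroup \(V_i\trianglelefteq G_i\); this is normal because \(N_i\trianglelefteq G\). Condition~(2) of that theorem gives \(\rho_{V_i}(G_i,W)=1\) for every \(W\leq V_i\), which is what we need. This even shows the condition holds for every chief series, not just for some.

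\emph{Converse.} Suppose such a chief series exists. We show by downward induction on \(i\), from \(i=r+1\) to \(i=1\), that \(G_i\) has the weak EKR property. Here we set \(G_{r+1}:=G/N_r\), the trivial group, which trivially has the weak EKR property. For the inductive step, assume \(G_{i+1}=G/N_i\) is weak EKR. Under the natural isomorphism \(G_i/V_i=(G/N_{i-1})/(N_i/N_{i-1})\cong G/N_i\), the quotient \(G_i/V_i\) is therefore weak EKR. The weak EKR property is invariant under group isomorphism, since it only involves the abstract subgroup lattice and conjugacy. By hypothesis, \(\rho_{V_i}(G_i,W)=1\) for all \(W\leq V_i\). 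Theorem~\ref{thm:normal-extension} then gives that \(G_i\) is weak EKR. At \(i=1\) we get \(G_1=G/N_0=G\).

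The only point needing care is index bookkeeping: matching \(G_i/V_i\) with \(G_{i+1}\) and handling the base case correctly. The real content is already contained in Theorem~\ref{thm:normal-extension}, so I expect no genuine obstacle.
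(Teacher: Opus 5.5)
Your proof is correct and follows the paper's argument. In the forward direction, the paper combines inheritance of the weak EKR property by quotients with Lemma~\ref{lem:local-defect-realization} applied directly, while you reach the same equality through the forward direction of Theorem~\ref{thm:normal-extension}, which is proved from exactly that lemma; the converse is the same descending induction using $G_i/V_i\cong G_{i+1}$.
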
 
\begin{proof} 
Suppose first that \(G\) has the weak EKR property. 
Let \[ 1=N_0\trianglelefteq N_1\trianglelefteq\cdots \trianglelefteq N_r=G \] be a chief series of $G$. 
For every \(i\), the quotient  $G_i=G/N_{i-1}$  has the weak EKR property. 
Applying Lemma~\ref{lem:local-defect-realization} inside \(G_i\), with normal subgroup \[ V_i=N_i/N_{i-1}\trianglelefteq G_i, \] gives \( \rho_{V_i}(G_i,W) = \rho(G_i,W) = 1 \) for every \(W\leq V_i\). 

Conversely, suppose that there exists a chief series with the assumed property. 
We prove, by descending induction on \(i\), that every quotient \(G/N_i\) has the weak EKR property. 
The quotient \(G/N_r\) is trivial and hence has the weak EKR property. 

Suppose that \(G_{i+1}=G/N_{i}\) has the weak EKR property. We claim that also $G_i$ has the weak EKR property. 
In the group \( G_{i}, \) the subgroup \( V_{i}=N_{i}/N_{i-1} \) is normal and \( G_{i}/V_{i}\cong G/N_i=G_{i+1}. \) 
By the assumption on the chief series we have that 
\( \rho_{V_i}(G_i,W)=1 \) for every $W\leq V_i$.
Theorem~\ref{thm:normal-extension} therefore implies that \(G_i\) has the weak EKR property. 
Proceeding down the chief series implies that \( G/N_0\cong G \) has the weak EKR property. 
\end{proof} 

\begin{remark}
Observe that the condition $\rho_{V_i}(G_i,W)=1$ in Theorem~\ref{thm:chief-series-characterization} is automatically satisfied for chief factors of prime order. 
Namely, if $V_i=N_i/N_{i-1}$ is of prime order, then for $W\leq V_i$ we have $W=\{1\}$ or $W=V_i$.  If $W=\{1\}$ then $\mathcal{U}_{G_i}(W)=\{1\}$, hence every $A$ with $A^{-1}A\subseteq \mathcal{U}_{G_i}(W)$ is of size 1. If $W=V_i$, then $\mathcal{U}_{G_i}(W)=V_i$, giving $\rho_{V_i}(G_i,W)=1$.
\end{remark}

Since by \cite[Theorem 2]{BardestaniMallahiKarai2015} if $G$ has the weak EKR property then \(G\) is solvable, it follows that every chief factor \(V_i\) is elementary abelian. 
Thus, for some prime \(p_i\) and some positive integer \(d_i\), \( V_i\cong \mathbb{F}_{p_i}^{\,d_i}. \) 
Let 
\[ L_i := G_i/C_{G_i}(V_i) \leq \operatorname{GL}(V_i) \] 
be the linear group induced by conjugation on \(V_i\), where $C_{G_i}(V_i)$ is the centralizer of $V_i$ in $G_i$. In additive notation, condition $\rho_{V_i}(G_i,W)$ becomes \[ A-A\subseteq\bigcup_{\ell\in L_i}W^\ell \quad\Longrightarrow\quad |A|\leq |W| \] for every vector subspace \(W\leq V_i\) and every subset \(A\subseteq V_i\).

In the following proposition, we give precise condition for chief factors of rank 2.
\begin{proposition}\label{prop: rank 2}
Suppose that $V_i=N_i/N_{i-1}\cong\mathbb{F}_{p}^{\,2}$ and let $L_i := G_i/C_{G_i}(V_i)\leq GL(V_i)\cong GL(2,p)$. Then $\rho_{V_i}(G_i,W)=1$ for every $W\leq V_i$ if and only if $L_i$ is intransitive on the set of one-dimensional subspaces of $V_i$.
\end{proposition}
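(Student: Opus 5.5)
We will work in the additive reformulation stated just before the proposition. There the condition is: for every subspace $W\le V:=V_i\cong\mathbb{F}_p^2$ and every $A\subseteq V$ with $A-A\subseteq\bigcup_{\ell\in L_i}W^\ell$, we have $|A|\le|W|$. The subspaces of $V$ are $0$, $V$, and the $p+1$ lines, so we split according to $\dim W$.

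The cases $\dim W=0$ and $\dim W=2$ are trivial, exactly as in the remark for prime order chief factors. If $W=0$ the union of conjugates is $\{0\}$, so $|A|=1$. If $W=V$ then $|A|\le|V|=|W|$. So everything reduces to $W$ a line, where $\mathcal{U}:=\bigcup_{\ell}W^\ell$ is the union of the lines in the $L_i$-orbit $\mathcal{O}$ of $W$ on the projective line $\mathbb{P}(V)$. The claim becomes: every $A$ with $A-A\subseteq\mathcal U$ has $|A|\le p$ for all such orbits $\mathcal{O}$, if and only if $L_i$ is intransitive on $\mathbb{P}(V)$.

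For the direction ``transitive $\Rightarrow$ failure'', note that if $\mathcal O$ is all of $\mathbb{P}(V)$ then $\mathcal U=V$. We may then take $A=V$, which gives $|A|=p^2>p$, so $\rho_V(G_i,W)=p>1$.

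For the converse, assume $L_i$ is intransitive. Then every orbit $\mathcal O$ misses some line $M$. Given $A$ with $A-A\subseteq\mathcal U$, we consider the projection $V\to V/M$. Two distinct elements $a,b\in A$ with $a-b\in M$ would give $a-b\in M\cap\mathcal U=\{0\}$, because distinct lines meet only in $0$; this is a contradiction. Hence $A$ meets each coset of $M$ at most once, and $|A|\le|V/M|=p=|W|$.

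This completes both directions. The only step requiring an idea is this coset-of-a-missing-line counting bound, and it is short. The key point to verify carefully is that $\mathcal U\cap M=\{0\}$ uses $M\notin\mathcal O$, and this holds precisely because $L_i$ is not transitive.
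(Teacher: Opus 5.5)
Your proposal is correct and follows the paper's proof essentially step for step. The trivial cases $W=0$ and $W=V_i$ are handled the same way. Transitivity gives $\bigcup_{\ell}W^\ell=V_i$, so $A=V_i$ yields density $p>1$. Intransitivity provides a line $U$ outside the orbit of $W$, and $A$ meets each coset of $U$ at most once, so $|A|\le p$.
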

\begin{proof}
  Let $W\leq V_i$. If $W=0$ or $W=V_i$, the result follows directly from the definition of $\rho_{V_i}(G_i,V_i)$. Hence, it remains to consider the case when $W$ has dimension 1.

Suppose first that \(L_i\) is transitive on the one-dimensional
subspaces of \(V_i\). Then
\[
\bigcup_{\ell\in L_i}W^\ell=V_i.
\]
Consequently \(A=V_i\) is admissible in the definition of
\(\rho_{V_i}(G_i,W)\), and hence $\rho_{V_i}(G_i,W)=p>1.$

Conversely, suppose that \(L_i\) is intransitive on the
one-dimensional subspaces of \(V_i\). The orbit
\[
W^{L_i}=\{W^\ell:\ell\in L_i\}
\]
is then a proper subset of the projective line. Choose a
one-dimensional subspace \(U\) not belonging to \(W^{L_i}\). Since two
distinct one-dimensional subspaces of \(V_i\) intersect trivially,
\begin{equation}
U\cap W^\ell=0
\qquad\text{for every }\ell\in L_i.
\label{eq:disjoint-line}
\end{equation}

Let \(A\subseteq V_i\) satisfy
\[
A-A\subseteq\bigcup_{\ell\in L_i}W^\ell.
\]
No two distinct elements of \(A\) can lie in the same coset of \(U\).
Indeed, if \(a,b\in A\), \(a\neq b\), and \(a+U=b+U\), then
\(
0\neq a-b\in U.
\)
On the other hand, the difference-set condition implies that
\(
a-b\in W^\ell
\)
for some \(\ell\in L_i\), contradicting
\eqref{eq:disjoint-line}. Since \(V_i\) has exactly \(p\) cosets of
\(U\), it follows that
\(
|A|\leq p=|W|.
\)
Thus
\(
\rho_{V_i}(G_i,W)=1.
\)
\end{proof}

\begin{corollary}

\label{cor:chief-rank-two}
Let \(G\) be a finite solvable group admitting a chief series
\[
1=N_0\trianglelefteq N_1\trianglelefteq\cdots
 \trianglelefteq N_r=G
\]
such that every chief factor
\(
V_i=N_i/N_{i-1}
\)
has rank at most \(2\). For each \(i\), let
\(
G_i=G/N_{i-1}\)
 and
$L_i=G_i/C_{G_i}(V_i).$

Then \(G\) has the weak EKR property if and only if, for every
rank-two chief factor
\(
V_i\cong\mathbb{F}_{p_i}^{\,2},
\)
the induced group \(L_i\) is intransitive on the set of
one-dimensional subspaces of \(V_i\). 
\end{corollary}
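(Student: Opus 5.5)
The plan is to combine Theorem~\ref{thm:chief-series-characterization} with the local analysis of the individual chief factors. One subtlety must be handled first. Theorem~\ref{thm:chief-series-characterization} is phrased as ``there exists a chief series'' for the sufficiency direction. Its necessity direction, however, works for \emph{any} chief series: if $G$ has the weak EKR property, then every quotient $G_i$ does, by \cite[Lemma 4]{BardestaniMallahiKarai2015}. Lemma~\ref{lem:local-defect-realization} then gives $\rho_{V_i}(G_i,W)=\rho(G_i,W)=1$ for all $W\leq V_i$. So for the fixed chief series of the corollary, the weak EKR property of $G$ is equivalent to the local condition
\[
\rho_{V_i}(G_i,W)=1\quad\text{for all } i \text{ and all } W\leq V_i .
\]

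Next I reduce this local condition to a statement about linear groups. Since $G$ is solvable, each $V_i$ is elementary abelian, so $V_i\cong\mathbb{F}_{p_i}^{d_i}$ with $d_i\in\{1,2\}$. Subgroups $W\leq V_i$ are exactly the $\mathbb{F}_{p_i}$-subspaces. Conjugation by $g\in G_i$ acts on $V_i$ through its image in $L_i$. Hence $\mathcal{U}_{G_i}(W)=\bigcup_{\ell\in L_i}W^\ell$, and the condition becomes the additive difference-set condition stated before Proposition~\ref{prop: rank 2}.

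I then split into cases by rank.
\begin{itemize}
\item If $d_i=1$, the chief factor has prime order, and the Remark following Theorem~\ref{thm:chief-series-characterization} shows that the condition holds automatically.
\item If $d_i=2$, Proposition~\ref{prop: rank 2} states that the condition holds for all $W\leq V_i$ if and only if $L_i$ is intransitive on the one-dimensional subspaces of $V_i$.
\end{itemize}
Combining the two cases, the local condition holds along the given series exactly when every rank-two factor has $L_i$ intransitive on lines. This gives both directions of the corollary.

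The only real obstacle is the quantifier mismatch in Theorem~\ref{thm:chief-series-characterization}: ``there exists a chief series'' versus the fixed chief series in the corollary. I would address it by explicitly noting that the necessity proof of that theorem never used any special choice of series. Sufficiency is then immediate for the given series.
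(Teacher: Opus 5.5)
Your proof is correct and is essentially the argument the paper intends. The paper states the corollary without a separate proof, as a direct consequence of Theorem~\ref{thm:chief-series-characterization}, the Remark on prime-order chief factors, and Proposition~\ref{prop: rank 2}; your explicit observation that the necessity half of Theorem~\ref{thm:chief-series-characterization} holds for an arbitrary chief series is exactly what is needed to apply it to the fixed series in the corollary.
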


In the following example, we construct an infinite family of weak EKR groups with chief factors of rank 2.

\begin{example}
\label{ex:cp2-semidirect-c3}
Let \(p\) be a prime such that
\(
p\equiv 2 \pmod 3.
\)
Let
\(
V=\mathbb{F}_p^2
\)
and let \(C_3=\langle a\rangle\) act on \(V\) through the matrix
\[
M=
\begin{pmatrix}
0 & -1\\
1 & -1
\end{pmatrix}
\in \operatorname{GL}(2,p).
\]
Thus, if \(V=\langle x,y\rangle\cong C_p^2\), then
$x^a=y$, and 
$y^a=x^{-1}y^{-1}.$
 Let
\(
G(p):=V\rtimes\langle a\rangle\cong C_p^2\rtimes C_3.
\) Then $G(p)$ has weak EKR property for every $p\geq 5$, while $G(2)\cong C_2^2 \rtimes C_3 \cong A_4$ does not have weak EKR.

Since \(3\nmid p-1\) it follows that the action of $C_3$ on $V$ is irreducible, hence $1 \trianglelefteq V \trianglelefteq G(p)$ is a chief series of $G(p)$. Since there are $p+1$ one-dimensional subspaces of V, it follows that the action of $C_3$ on one-dimensional subspaces is not transitive for every $p\geq 5$, while for $p=2$, the action is indeed transitive. By Corollary~\ref{cor:chief-rank-two} if follows that $G(p)$ has weak EKR if and only if $p\neq 2$.
\end{example}

The condition given in Theorem~\ref{thm:chief-series-characterization} is naturally related to the difference-set
parameters studied by Matolcsi and Ruzsa \cite{MatolcsiRuzsa2014}. Let \(V\) be a finite vector
space, let \(L\leq \operatorname{GL}(V)\), and let \(W\leq V\) and let $\mathcal{U}_L(W)=\bigcup_{\ell\in L}W^\ell.$

Since every \(W^\ell\) is a vector subspace, the set \(\mathcal{U}_L(W)\)
contains \(0\) and is closed under taking additive inverses. Hence it
is a standard set in the terminology of Matolcsi and Ruzsa \cite{MatolcsiRuzsa2014}.

For a standard set \(S\subseteq V\), they consider the parameter
\[
\overline{\Delta}(S)
 :=
 \max\bigl\{
 |A|:A\subseteq V,\ A-A\subseteq S
 \bigr\}.
\]
Consequently, the local condition associated with the pair
\((L,W)\),
\[
A-A\subseteq\bigcup_{\ell\in L}W^\ell
\quad\Longrightarrow\quad
|A|\leq |W|,
\]
is precisely the equality
\[
\overline{\Delta}\bigl(\mathcal{U}_L(W)\bigr)=|W|.
\]

Thus, at the level of an elementary abelian chief factor, the local
weak-EKR condition is an instance of the difference-intersectivity
problem studied by Matolcsi and Ruzsa, specialized to standard sets
that arise as unions of orbits of subspaces under an irreducible
linear group. The additional group-theoretic feature in our setting
is that this condition is imposed on every subspace of every chief
factor and is then combined along a chief series.
A related forbidden-difference formulation has also been considered
in a recent paper by Xu and Yip \cite{XuYip2026}, although in a more general
setting.

We conclude this section with an open problem. Proposition~\ref{prop: rank 2}
gives a complete characterization of the condition appearing in
Theorem~\ref{thm:chief-series-characterization} when the chief factor has rank two. It would be interesting
to obtain an analogous characterization in higher rank.

\begin{problem}\label{prob:higher-rank}
Let $V=\mathbb{F}_p^d$, where $d\geq 3$, and let
$L\leq GL(V)$ be an irreducible linear group. Characterize those
groups $L$ for which
\[
 A-A\subseteq \bigcup_{\ell\in L}W^\ell
 \quad\Longrightarrow\quad
 |A|\leq |W|
\]
for every subspace $W\leq V$ and every subset $A\subseteq V$.
\end{problem}

\section{Transitive groups with fixed point stabilizer}

The weak EKR property requires that $\rho(G,H)=1$ for every subgroup $H\leq G$. A complementary problem was posed in \cite[Problem 7.4]{HujdurovicKovacsKutnarMarusic2025}: instead of fixing $G$ and varying $H$, one fixes a group $H$ and asks which intersection densities can occur for transitive permutation groups with point stabilizers isomorphic to $H$.

\begin{problem}\cite[Problem 7.4]{HujdurovicKovacsKutnarMarusic2025}
\label{prob:point stabilizer H}
    Given a group $H$ determine all possible intersection densities of transitive permutation
groups having point stabilizers isomorphic to $H$.
\end{problem}

The cases where \(H\cong C_2\) or
\(H\cong C_3\) were already settled in
\cite{HujdurovicKovacsKutnarMarusic2025}.

Indeed, if \(H\cong C_2\), then an intersecting set of
maximum size \(3\) cannot occur, while intersecting sets of maximum
size \(d\) occur for every integer \(d\geq 2\), \(d\neq 3\) (see \cite[Example 3.3]{HujdurovicKovacsKutnarMarusic2025}.
Consequently, the set of possible intersection densities is
\[
\left\{1\right\}
\cup
\left\{\frac{d}{2}:d\geq 4\right\}.
\]
If \(H\cong C_3\), then every integer \(d\geq 3\) occurs as
the cardinality of a maximum intersecting set (see \cite[Example 7.1]{HujdurovicKovacsKutnarMarusic2025}. Hence the set of
possible intersection densities is
\[
\left\{\frac{d}{3}:d\geq 3\right\}.
\]

We now solve Problem~\ref{prob:point stabilizer H} for every finite group \(H\) of order at
least \(4\).

\begin{theorem}\label{thm:Density over H}
Let \(H\) be a finite group of order \(m\geq 4\). Then for every $n\geq m$ there exists a transitive permutation group $G$ with point stabilizer isomorphic to $H$ and with intersection density equal to $n/m.$
\end{theorem}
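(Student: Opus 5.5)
The plan is to build $G$ as a semidirect product in which $H$ is a complement to a normal subgroup, and then compute the maximum intersecting set using the coset decomposition from the proof of Theorem~\ref{thm:normal-extension}. I have not checked that the numerics actually reach every $n\ge m$; that is where the real work lies.

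Concretely, I would take $G=V\rtimes H$, where $V$ is an abelian group on which $H$ acts faithfully, and let $G$ act on the right cosets of $H$.

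\emph{Faithfulness.} The action is faithful because the core of $H$ is a normal subgroup $C\leq H$ with $[C,V]\leq C\cap V=1$. So $C$ acts trivially on $V$, and $C=1$ by faithfulness of the action of $H$ on $V$.

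\emph{The conjugates of $H$.} Its conjugates are $H^{v}$ for $v\in V$. An element $(v,h)$ lies in $\mathcal{U}_G(H)$ exactly when $v\in[V,h]:=\{u^{-1}u^{h}:u\in V\}$. Thus the derangement structure is governed by the family of subgroups $[V,h]\leq V$, $h\in H$. An intersecting set $\mathcal{F}$ with $1\in\mathcal{F}$ splits along the cosets of $V$ as in the proof of Theorem~\ref{thm:normal-extension}. Its intersection with $V$ is a set $A$ with $A-A\subseteq\{0\}$, so $|A|\le 1$. More generally, $|\mathcal{F}|$ is at most the maximum over "transversal-like" choices $\{(v_h,h)\}$ with $v_{h}^{-1}v_{k}^{\,\cdot}\in[V,h^{-1}k]$ (suitably twisted). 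Hence $\rho(G,H)$ is controlled by a combinatorial packing problem on the pairs $(h,[V,h])$.

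\emph{Choice of $V$.} The key step is to choose $V$ so that this packing problem has optimum exactly $n$. My first attempt is to take $V=V_0\times V_1$. Here $V_0$ is the regular permutation module $\mathbb{F}_2[H]$, which gives faithfulness and forces $[V,h]$ to be large for every $h\ne1$. The second factor $V_1=\mathbb{F}_q^{\,k}$ carries an action through a suitable quotient of $H$, and by tuning the parameters $(q,k)$ the count of admissible extra elements can be increased one at a time.

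\emph{Lower and upper bounds.} For the lower bound I would exhibit $\mathcal{F}=H\cup\{\text{$n-m$ extra elements }(v,1)\text{ or }(v,h)\}$ and check the pairwise condition directly. For the upper bound I would use the fibre bound $|\mathcal{F}\cap fV|\le 1$ together with an explicit analysis of which $h$-fibres can be occupied simultaneously.

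\emph{Main obstacle.} I expect the main obstacle to be making the density hit every value $n\ge m$ exactly, not just arbitrarily large values. The cases $H\cong C_2$ show that gaps can genuinely occur, so the hypothesis $m\ge4$ must be used. My fallback is to realise the small cases $m\le n<2m$ with a hand-made module, and to obtain larger $n$ by passing to a direct product with a regular factor. Specifically, I would use $G\times R$ acting on cosets of $H\times 1$ and compute how this adds to the maximum intersecting set, iterating as needed.
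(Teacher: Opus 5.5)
Your approach cannot work, and the reason is already in your own fibre bound. In $G=V\rtimes H$ acting on the right cosets of $H$, the normal subgroup $V$ satisfies $V\cap H^g=(V\cap H)^g=1$ for all $g$, i.e.\ $V\cap\mathcal{U}_G(H)=\{1\}$. So if $f,f'\in\mathcal{F}$ lie in the same coset of $V$, then $f^{-1}f'\in V\cap\mathcal{U}_G(H)$ forces $f=f'$, and hence $|\mathcal{F}|\le [G:V]=|H|=m$. Equivalently, $V$ is a regular subgroup, hence a clique of size $[G:H]$ in $\Gamma_G$, and the clique--coclique bound gives $\alpha(\Gamma_G)\le|H|$. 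Thus $\rho(V\rtimes H,H)=1$ for every choice of $V$ and every module structure. No choice of $V_0\times V_1$ or of $(q,k)$ can give $n>m$.

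Concretely, an ``extra element'' $(v,1)$ lies in the fibre $V$ together with the identity and differs from it by a nonzero element of $V$. That element is a derangement, since $[V,1]=0$. An extra $(v,h)$ collides with $h\in H$ in the same way. The fallback also fails. In $G\times R$ acting on cosets of $H\times 1$, an element $(g,r)$ fixes a point only if $r=1$. So every intersecting set lies in a single coset $G\times\{r\}$, and $\rho(G\times R,H\times 1)=\rho(G,H)$. In any case, $(V\times R)\rtimes H$ is again a split extension with a normal complement to $H$.

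The missing idea is that the point stabilizer must not have a normal complement. Instead, the conjugates of the stabilizer should overlap in a controllable combinatorial pattern. The paper takes $L=\operatorname{Sym}(H)$, $G=L\wr S_n$, and stabilizer $D=\{(h,h,1,\ldots,1):h\in H\}$, which sits inside the base group. Every nonidentity element of a conjugate of $D$ has support of size exactly two. So the supports occurring in an intersecting set $\mathcal{F}\ni 1$ form a pairwise intersecting family of $2$-subsets, which must be one of three kinds:
\begin{itemize}
\item a single pair: then $L\times\operatorname{Sym}(H)_{1_H}$ is a regular subgroup of $L\times L$ acting on cosets of the diagonal $\Delta(H)$, giving $|\mathcal{F}|\le m$;
\item a star: there is at most one element per support, giving $|\mathcal{F}|\le n$;
\item a triangle: $|\mathcal{F}|\le 4$.
\end{itemize}
The set $\{1\}\cup\{(h,1,\ldots,1,h,1,\ldots,1)\}$ of size $n$ is intersecting, because $h$ and $h^{-1}$ are conjugate in $\operatorname{Sym}(H)$. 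The hypothesis $m\ge 4$ is exactly what makes $\max(m,n,4)=n$.
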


\begin{proof}
Let
\(
L=\operatorname{Sym}(H),
\)
where \(H\) is embedded in \(L\) by its right regular representation.
Let
\(
G=L\wr S_n=L^n\rtimes S_n,
\)
where \(S_n\) acts by permuting the \(n\) direct factors of the base
group
\(
B=L^n.
\)
Define
\[
D=
\left\{
(h,h,1,\ldots,1):h\in H
\right\}\leq B.
\]
It is easy to see that 
$D\cong H$.
We consider the action of \(G\) by right multiplication on the right
cosets of \(D\).

This action is faithful. Indeed, if
\(\sigma=(2\,3)\in S_n\), then
\[
D^\sigma
=
\left\{
(h,1,h,1,\ldots,1):h\in H
\right\},
\]
and hence
\[
D\cap D^\sigma=1.
\]
It follows that
\[
\operatorname{core}_G(D)
\leq D\cap D^\sigma=1.
\]
Thus the coset action is faithful and transitive, and its point
stabilizers are isomorphic to \(H\).
We claim that
$\rho(G)=n/m$.

For
\(
x=(x_1,\ldots,x_n)\in B,
\)
define
\(
\operatorname{supp}(x)
=
\{i:x_i\neq 1\}.
\)
Since \(B\unlhd G\) and \(D\leq B\), every conjugate of \(D\) is
contained in \(B\). Moreover, every nonidentity element of a
conjugate of \(D\) has support of size two.
More precisely, for distinct \(i,j\), every conjugate of \(D\)
supported on \(\{i,j\}\) has the form
\[
D_{ij}^{a,b}
=
\left\{
(1,\ldots,a^{-1}ha,\ldots,b^{-1}hb,\ldots,1):
h\in H
\right\}
\]
for suitable \(a,b\in L\).

Let \(\mathcal{F}\) be an intersecting set in \(G\), such that
\(1\in\mathcal{F}\). Every nonidentity element of \(\mathcal{F}\)
fixes a point, and hence belongs to a conjugate of \(D\). Therefore
every nonidentity element of \(\mathcal{F}\) belongs to \(B\) and has
support of size two.
If \(x,y\in\mathcal{F}\) are distinct, then \(x^{-1}y\) also fixes a
point. Thus
\begin{equation}\label{eq:supp 2}
\left|\operatorname{supp}(x^{-1}y)\right|=2.
\end{equation}
Since multiplication in \(B\) is coordinatewise,
\begin{equation}\label{eq:supp x y}
\operatorname{supp}(x^{-1}y)
=
\{i:x_i\neq y_i\}.
\end{equation}

We first consider the case in which all nonidentity elements of
\(\mathcal{F}\) have the same support, say \(\{1,2\}\). Let
\(
M=L\times L
\)
and
\[
\Delta(H)=\{(h,h):h\in H\}\leq M.
\]
Regarding the first two coordinates only, \(\mathcal{F}\) is an
intersecting set in the action of \(M\) on the right cosets of
\(\Delta(H)\).

Let
$K=L_{1_H}$
be the stabilizer in \(L=\operatorname{Sym}(H)\) of the identity
element \(1_H\). Since \(H\) acts regularly on itself,
$L=HK$
and
$H\cap K=1.$
Let 
\(
R=L\times K\leq M.
\)
We have
\[
|R|
=
|L||K|
=
\frac{|L|^2}{|H|}
=
[M:\Delta(H)]
\]
and
\(
R\cap\Delta(H)=1.
\)
Therefore \(R\) acts regularly on \(M/\Delta(H)\). By the
clique-coclique bound it follows that
\begin{equation} \label{eq: F at most m}
|\mathcal{F}|\leq |\Delta(H)|=m.
\end{equation}

Suppose now that at least two distinct supports occur. Define
\[
\mathcal{S}
=
\left\{
\operatorname{supp}(x):
x\in\mathcal{F}\setminus\{1\}
\right\}.
\]
By \eqref{eq:supp 2}, no two members of \(\mathcal{S}\) are disjoint. Thus
\(\mathcal{S}\) is a pairwise intersecting family of two-element
subsets of \(\{1,\ldots,n\}\).

Every pairwise intersecting family of two-element subsets is either
a star, so that all of its members contain a common point, or is
contained in a triangle
\(
\bigl\{
\{i,j\},\{i,k\},\{j,k\}
\bigr\}.
\)
Indeed, if \(\{i,j\}\) and \(\{i,k\}\) belong to the family, then
every member not containing \(i\) must be \(\{j,k\}\).

Assume first that \(\mathcal{S}\) is a star with centre \(i\). We
show that at most one element of \(\mathcal{F}\) has any prescribed
support. Suppose that distinct \(x,y\in\mathcal{F}\) both have
support \(\{i,j\}\), and that \(z\in\mathcal{F}\) has support
\(\{i,k\}\), where \(j\neq k\).
By \eqref{eq:supp 2} and \eqref{eq:supp x y}, the elements \(x\) and \(z\) must agree in coordinate
\(i\), and similarly \(y\) and \(z\) must agree in coordinate \(i\).
Thus
\(
x_i=z_i=y_i.
\)
It follows that \(x^{-1}y\) can be nontrivial only in coordinate
\(j\), contradicting \eqref{eq:supp 2}. Hence there is at most one element of
\(\mathcal{F}\) for each support in the star. Since a star contains
at most \(n-1\) two-element subsets, it follows that
\begin{equation}\label{eq:F at most n}
|\mathcal{F}|
\leq 1+(n-1)
=n.
\end{equation}

If \(\mathcal{S}\) has no common point, then it is contained
in a triangle. The same argument shows that at most one element of
\(\mathcal{F}\) has each of the three possible supports. It follows that 
$|\mathcal{F}|\leq 4$ in this case.

Combining this with \eqref{eq: F at most m} and \eqref{eq:F at most n}, we obtain
\(
\alpha(\Gamma_G)\leq n,
\) and therefore 
$$\rho(G)\leq n/m.$$

We now prove the reverse inequality.
Take a nonidentity element \(h\in H\). In the right regular
permutation representation of \(H\), the permutations \(h\) and
\(h^{-1}\) have the same cycle structure. Therefore \(h\) and
\(h^{-1}\) are conjugate in
\(
L=\operatorname{Sym}(H).
\)

For \(2\leq i\leq n\), let 
\[
x_i
=
(h,1,\ldots,1,
 \underset{i\text{-th coordinate}}{h},
 1,\ldots,1).
\]
 Each \(x_i\)
belongs to a conjugate of \(D\).
If \(i\neq j\), then
\(
x_i^{-1}x_j
\)
has entry \(h^{-1}\) in coordinate \(i\), entry \(h\) in coordinate
\(j\), and identity entries elsewhere. Since \(h^{-1}\) and \(h\)
are conjugate in \(L\), the element \(x_i^{-1}x_j\) belongs to a
conjugate of \(D\). Hence
\(
\{1,x_2,\ldots,x_n\}
\)
is an intersecting set of size \(n\), and therefore
$\rho(G)=n/m.$
\end{proof}

\section*{Acknowledgements}
{\footnotesize{
The work of Ademir Hujdurovi\' c is supported by the Slovenian Research and Innovation Agency
research programme P1-0404 and research projects N1-0481, J1-70047, J1-70035, J1-70046, N1-0428, N1-0459, J1-60012, N1-0391 and J1-50000. The work of Klavdija Kutnar is supported by the Slovenian Research and Innovation Agency
research programme P1-0285 and research projects N1-0481, J1-70047, J1-70035, J1-70046, N1-0428, J1-60012, N1-0391 and J1-50000.}}

\bibliographystyle{plain}
\bibliography{references}

\end{document}